\newtheorem{theorem}{Theorem}
\newtheorem{definition}{Definition}
\newtheorem{remark}{Remark}
\newenvironment{proof}[1][Proof]{\textbf{#1.} } {\ \ \hfill\hbox to .1pt{} \hfill\hbox to .1pt{}
\hfill$\blacksquare$\par}
\begin{document}
\centerline{\Large{\bf{Observer design and practical  stability }}}
\centerline{\Large{\bf{of nonlinear systems under unknown time-delay}}}
\centerline{}
\centerline{\bf {Nadhem  ECHI}}
\centerline{}

\centerline{Gafsa University, Faculty of Sciences of Gafsa}
\centerline{Department of Mathematics, Zarroug   Gafsa 2112 Tunisia}
\centerline{E-mail: nadhemechi\_fsg@yahoo.fr}

\abstract{In the present  paper, we study  observer design and
 we establish  some sufficient conditions for
  practical exponential stability for a class of time-delay nonlinear systems written in triangular form.
In case of delay, the exponential convergence of the observer was confirmed.
   Based on the Lyapunov-Krasovskii functionals, the practical stability of the proposed observer is achieved. Finally,
   a physical model and
simulation findings show  the feasibility of the suggested strategy.
}
 \bigskip

 \vspace*{0.1cm}
 {\bf Mathematics Subject Classification.} 93C10, 93D15, 93D20.

{\bf Keywords.} Observer; exponential stability; practical stability; time delay; Lyapunov-Krasovskii.
\vspace*{0.1cm}
\section{Introduction}
Time-delay systems are one of the  basic mathematical models of real phenomena such as nuclear reactors,
 chemical engineering systems, biological systems
\cite{lili}, and population dynamics models \cite{muroya}.
The analysis of systems without delays is generally simple as compared to nonlinear systems  under time delays. However,
 there are a number of  problems relating to nonlinear observer for time delay system.
In particular, a problem of theoretical and practical importance is the design of observer-based  for time-delay systems.
In literature, a separation principle, for nonlinear free-delay systems, using high gain observers is provided in \cite{Atassi99} and \cite{Atassi00}.
Much attention has been paid to solve such a problem and many observer designs for time-delay nonlinear systems  approaches have been used.
In \cite{zhou}, it is shown that observer design for a class of nonlinear time delay systems is solved by using   linear matrix inequality.
 In \cite{hamed2009}, some sufficient conditions for
 practical uniform  stability of a class of uncertain time-varying systems with a bounded time-varying state delay
 were provided using the  Lyapunov stability theory.
  \cite{nadhem} and \cite{nadhem2017} show that state and output feedback controllers of time-delay systems,
written in a triangular  linear growth condition are reached under delay independent conditions and under delay dependent conditions respectively.

The observer design problem for nonlinear systems satisfying a Lipschitz continuity condition has been a topic of  numerous papers,
such as for nonlinear free-delay systems \cite{Atassi99, Atassi00, Zhu, Rajamani}, for nonlinear systems with
unknown, time-varying \cite{omar,ghanes, Farza}.
A reduced-order observer design method is presented in \cite{Zhu} for a class of
Lipschitz nonlinear continuous-time systems  without time delays which extend the results in \cite{Rajamani}.

However, in practice, dynamics, measurement, noises or disturbances
often prevent the error signals from tending to the origin.
Thus, the origin is not  a point of equilibrium of the system. An
  additive term on the right-hand side of the nonlinear system is used to present the uncertainties systems. For this reason,
the property is referred to as practical stability which is more suitable  for nonlinear free-delay systems ( see \cite{amel2009, corles})
and for nonlinear systems with  time-delay  ( see \cite{ hamed, ghanes, omar, raul}).
Under unknown, bounded  time-delay, an observer design for a class of nonlinear system is presented in \cite{omar}.
\cite{hamed} concluded that a class  of nonlinear time delay systems is conformed due to some assumptions and the
 time varying delay bounded the practical
exponential stability.
Based on conditions in terms of Ricatti differential equation, the problem of the  practical exponential stability of a
class of delayed nonlinear systems is proved in  \cite{omar2015}.
 \cite{omar:2015} investigated the problem of design for a class of nonlinear systems under unknown time-varying delay. Based on sufficient assumptions,
the practical and the exponential stability is achieved.

The main aim of the current  paper is to generalize the idea investigated, for instance, in \cite{ ghanes, omar}
for the purpose of establishing   the design of observer. We investigate the problem of   exponential convergence of the observation error
 of a class of nonlinear time-delay systems with constant delay. We impose a generalized condition on the nonlinearity to cover the time-delay systems
considered in \cite{Be} and a class of systems considered in \cite{ghanes} and \cite{omar} .
Under unknown and variable time delay and by constructing Lyapunov–Krasovskii functionals,
new criteria are given to insure the practical stability in which the error converges to a small ball.

The rest of this paper is organized as follows: In Section 2, the exponential stable and the practical stability definition are presented
 and the system description is given. The observer design synthesis method and its stability analysis for a class of nonlinear systems are proved
  in section 3. In section 4, we illustrate our results by a physical model.

\section{System description and basic results}
  Consider  time delay system of the form:
\begin{equation}\left\{
                   \begin{array}{ll}
                     \dot{x}(t)=f(t,x(t),x(t-\tau(t))) & \hbox{} \\
                     x(s)=\varphi(s) & \hbox{ }\\
                   \end{array}
                 \right.
\label{1}
 \end{equation}
where $\tau(t)$ represents a positive real-value unknown
function that denotes the time varying delay affecting both
state and input of the system, $x(t, \varphi)$ is the solution of the system with initial function
$\varphi$, verifying:
$$x(s, \varphi) = \varphi(s),\ \ \ \ \forall s\in[-\tau; 0].$$
$ \varphi\in\mathcal{C}$ where $\mathcal{C}$ denotes the Banach space
of continuous functions mapping the interval $[-\tau, 0]\rightarrow\mathbb{R}^{n}$ equipped with the supremum-norm:
$$\parallel \varphi\parallel_{\infty}\ =\ \max_{s\in[-\tau,0]}\parallel\varphi(s)\parallel$$
$\| \ \|$ being the Euclidean-norm. The map $f :  \mathbb{R}_{+}\times\mathbb{R}^{n} \times \mathbb{R}^{n}\rightarrow \mathbb{R}^{n}$
is a piecewise continuous function in $t$, and locally Lipschitz in $x$, and satisfies
$f(t,0, 0) = 0,\ \forall t\geq0.$

For $r > 0$, denote $B_{r} = \{x \in \mathbb{R}^{n}/\|x\| \leq r\}.$
In the case when $f(t, 0,0)\neq 0$, for certain $t \geq 0$, we shall study the problem of asymptotic
stability not for the origin but for a neighborhood of the origin approximated by a small ball of radius $r > 0$ centrad at the origin.

The function segment $x_{t}$ is defined by $x_{t}(\theta) = x(t + \theta),\,\ \theta \in[-\tau, 0].$
For $\varphi \in \mathcal{C}$, we
denote by $x(t,\varphi)$ or shortly $x(t)$ the solution of \eqref{1} that satisfies $x_{0} =\varphi. $ The segment
of this solution is denoted by $x_{t}(\varphi)$ or shortly $x_{t}$.
 \begin{definition}\cite{pepek}
  The zero solution of system \eqref{1} is said to be  globally exponentially stable with
a decay rate $\alpha>0$, if there exist positive reals $\alpha $ and $\beta$ such
that, for all $t\geq t_{0}$ and $\varphi\in \mathcal{C}$, the following inequality holds:
\[ \parallel x(t)\parallel\leq \beta \|\varphi\|_{\infty}\exp{(-\alpha (t-t_{0}))}.\]
    \end{definition}
 \begin{definition}\cite{hamed}
We say that $B_{r}$ is globally uniformly exponentially
stable if there exist $\lambda_{1} >0$ and $\lambda_{2}> 0$ such that for
all $t\geq t_{0}$  and $\varphi \in \mathcal{C}$, we have
$$\|x(t)\|\leq r+\lambda_{1}\|\varphi\|_{\infty}\exp(-\lambda_{2}(t-t_{0})).$$

System \eqref{1} is globally uniformly practically exponentially stable if there exists $r > 0$ such that $B_{r}$
is globally uniformly exponentially stable. \end{definition}
\begin{remark}
When $r = 0$, in this case the origin is an equilibrium point, then we
point the classical definition of the exponential stability (see\cite{nadhem},\cite{pepek}).\end{remark}
\begin{remark}
The global uniform practical asymptotic stability of a ball $B_{r}$
defined in this paper is less restrictive than the stability of compact set given
in \cite{lin1996} of free-delay systems.\end{remark}
 In this paper, we consider the time delay nonlinear system
  \begin{equation}\left\{
                   \begin{array}{ll}
                     \dot{x}(t)=Ax(t)+f(x(t),x(t-\tau(t)),u(t),u(t-\tau(t))),\ \ \ \ \ t\geq0 & \hbox{,} \\
                     y(t)=Cx(t) & \hbox{,}\\
                     x(s)=\varphi(s),\ \ \ \ \ \ \ \ \ \ \ \ \ \ \ \ \ \ \  \ \ \ \ \ \  \ \ \ \
                     \ \ \ \ \ \ \ \ \ \ \ \ \ \ \ \ \ \ \forall s\in[-\tau,0] & \hbox{. }
                   \end{array}
                 \right.
 \label{3}\end{equation}
where $ x(t)\in\mathbb{R}^{n}$ is the state vector, $u(t)\in \mathbb{R}^{m}$ is the
 input of the system, $y(t)\in \mathbb{R}$
is the measured output, $\tau (t)$ is a continuously differentiable function which
denotes the time-varying delay, $x(t-\tau(t))$ and $u(t-\tau(t))$ are,
 respectively, the delayed state and input. The matrices $A,$ and $C$ are given by,
 $$A=\left[
 \begin{array}{ccccc}
  0 & 1 &0& \cdots & 0 \\
  0 & 0 & 1 & \cdots&0 \\
 \vdots & \vdots& \vdots & \ddots & \vdots \\
 0 & 0 & 0&\cdots & 1 \\
 0 & 0 & 0&\cdots & 0 \\
 \end{array}
 \right]
,\, C=\left[
    \begin{array}{ccccc}
      1 & 0 & \cdots&0 & 0 \\
    \end{array}
  \right]$$
  and the perturbed term is
$$f(x(t), x(t -\tau(t) , u(t),u(t-\tau(t))) =
 [f_{1}(x(t), x(t -\tau(t) ),u(t),u(t-\tau(t))), \cdots , f_{n}(x(t), x(t -\tau(t) ),u(t),u(t-\tau(t)))]^{T}.$$
The mappings
 $f_{i}:\mathbb{R}^{n}\times\mathbb{R}^{n}\times\mathbb{R}^{m}\times\mathbb{R}^{m}\rightarrow\mathbb{R},$ $i=1,\ldots,n,$ are smooth with
 $f_{i}(0,0,0,0)=0.$

 Throughout the paper, the time argument is omitted and the delayed state vector $x(t - \tau(t))$ is noted by $x^{\tau(t)}$.  $A^{T}$ means the
transpose of $A$.
$\lambda_{max}(A)$ and $\lambda_{min}(A)$ denote the maximal and minimal eigenvalue of a matrix
 $A$ respectively.
\section{ Main results}
We suppose that $f$ satisfies the following assumption:

 $\textbf{A1.}$  There exists functions
$\gamma_{1}(\varepsilon)>0$, $\gamma_{2}(\varepsilon)>0$  and $\gamma_{3}(\varepsilon)>0$ such that for $\varepsilon>0$,
\begin{eqnarray}\displaystyle\sum_{i=1}^{n}\varepsilon^{i-1}|f_{i}(x,\overline{x},u,u^{\tau(t)})
-f_{i}(y,\overline{y},u,u^{\tau(t)})|&\leq&\gamma_{1}(\varepsilon)
\displaystyle\sum_{i=1}^{n}\varepsilon^{i-1}|x_{i}-y_{i}|+\gamma_{2}(\varepsilon)
\displaystyle\sum_{i=1}^{n}\varepsilon^{i-1}|\overline{x}_{i}-\overline{y}_{i}|\label{A1},\\ \nonumber\\
\displaystyle\sum_{i=1}^{n}\varepsilon^{i-1}|f_{i}(x,\overline{x},u,u^{\tau(t)})
-f_{i}(x,\overline{y},u,\overline{u}^{\tau(t)})|&\leq& \gamma_{2}(\varepsilon)
\displaystyle\sum_{i=1}^{n}\varepsilon^{i-1}|\overline{x}_{i}-\overline{y}_{i}|
+\gamma_{3}(\varepsilon)
\displaystyle\sum_{i=1}^{n}\varepsilon^{i-1}\left(\|u^{\tau(t)}-\overline{u}^{\tau(t)}\|\right).\end{eqnarray}
\begin{remark}\label{rem1}
We can easily show if the system  \eqref{3} has a triangular structure  (see \cite{ibrir},\cite{ghanes}), that is
each $f_{i}$ depends only on $(x_{1},\cdots, x_{i},x_{1}^{\tau},\cdots, x_{i}^{\tau},u,u^{\tau(t)} )$ and if we suppose that $f_{i}$ is globally
Lipschitz with
respect to $(x_{1},\cdots, x_{i}),\ (x_{1}^{\tau},\cdots, x_{i}^{\tau})$ and $u^{\tau(t)}$,
uniformly with respect to $u$, which implies that
\begin{eqnarray} |f_{i}(x,\overline{x},u,u^{\tau(t)})
-f_{i}(y,\overline{y},u,\overline{u}^{\tau(t)})|&\leq& |f_{i}(x,\overline{x},u,u^{\tau(t)})
-f_{i}(x,\overline{z},u,\overline{u}^{\tau(t)})|+|f_{i}(x,\overline{z},u,\overline{u}^{\tau(t)})
-f_{i}(y,\overline{y},u,\overline{u}^{\tau(t)})|\nonumber\\ \nonumber\\
 &\leq& k_{1}\displaystyle\sum_{j=1}^{i}(| \overline{x}_{j}-\overline{z}_{j}|)+k_{1}\|u^{\tau(t)}-\overline{u}^{\tau(t)}\|\label{f1}\\
& &+ k_{2}\displaystyle\sum_{j=1}^{i}(| x_{j}-y_{j}|+| \overline{z}_{j}-\overline{y}_{j}|),
\label{lips}\end{eqnarray}
where $k_{1} > 0, \ (k_{2} > 0)$ is a Lipschitz constant in \eqref{f1}, and  in \eqref{lips} respectively,
then assumption $\textbf{A1}$ is fulfilled. Indeed,
$$\begin{array}{lll}  \displaystyle\sum_{i=1}^{n}\varepsilon^{i-1}|f_{i}(x,\overline{x},u,u^{\tau(t)})
-f_{i}(y,\overline{y},u,\overline{u}^{\tau(t)})|&\leq &\displaystyle\sum_{i=1}^{n}\varepsilon^{i-1}
k_{1}\displaystyle\sum_{j=1}^{i}(| \overline{x}_{j}-\overline{z}_{j}|)+\displaystyle\sum_{i=1}^{n}\varepsilon^{i-1}
k_{1}\|u^{\tau(t)}-\overline{u}^{\tau(t)}\|\\\\
& &+ \displaystyle\sum_{i=1}^{n}\varepsilon^{i-1}
k_{2}\displaystyle\sum_{j=1}^{i}(| x_{j}-y_{j}|+| \overline{z}_{j}-\overline{y}_{j}|)
\end{array}$$
But, on the one hand,  we have
$$\begin{array}{lll}  \displaystyle\sum_{i=1}^{n}\varepsilon^{i-1}
k_{2}\displaystyle\sum_{j=1}^{i}(|x_{j}-y_{j}|&=&k_{2}(1+\varepsilon+\cdots+\varepsilon^{n-1})|x_{1}-y_{1}|\\
& &+k_{2}(\varepsilon+\cdots+\varepsilon^{n-1})|x_{2}-y_{2}|+\cdots+ k_{2}\varepsilon^{n-1}|x_{n}-y_{n}|\\\\
&=&k_{2}(1+\varepsilon+\cdots+\varepsilon^{n-1})|x_{1}-y_{1}|\\
& &+k_{2}\varepsilon(1+\cdots+\varepsilon^{n-2})|x_{2}-y_{2}|+\cdots+ k_{2}\varepsilon^{n-1}|x_{n}-y_{n}|\\\\
&\leq&k_{2}(1+\varepsilon+\cdots+\varepsilon^{n-1})\displaystyle\sum_{i=1}^{n}\varepsilon^{i-1}|x_{i}-y_{i}|,
\end{array}$$
and thus also
$$\begin{array}{lll}  \displaystyle\sum_{i=1}^{n}\varepsilon^{i-1}
k_{2}\displaystyle\sum_{j=1}^{i}(|\overline{z}_{j}-\overline{y}_{j}|
&\leq&k_{2}(1+\varepsilon+\cdots+\varepsilon^{n-1})\displaystyle\sum_{i=1}^{n}\varepsilon^{i-1}|\overline{x}_{i}-\overline{y}_{i}|,\\\\
 \displaystyle\sum_{i=1}^{n}\varepsilon^{i-1}
k_{1}\displaystyle\sum_{j=1}^{i}(|\overline{x}_{j}-\overline{y}_{j}|
&\leq&k_{1}(1+\varepsilon+\cdots+\varepsilon^{n-1})\displaystyle\sum_{i=1}^{n}\varepsilon^{i-1}|\overline{x}_{i}-\overline{y}_{i}|.
\end{array}$$
and on the other hand, we have
$$\begin{array}{lll}\displaystyle\sum_{i=1}^{n}\varepsilon^{i-1}
k_{1}\|u^{\tau(t)}-\overline{u}^{\tau(t)}\|
&\leq&k_{1}(1+\varepsilon+\cdots+\varepsilon^{n-1})\displaystyle\sum_{i=1}^{n}\varepsilon^{i-1}
\|u^{\tau(t)}-\overline{u}^{\tau(t)}\|.
\end{array}$$
So assumption $\textbf{A1.} $ is satisfied with
$$\gamma_{1}(\varepsilon) = \gamma_{2}(\varepsilon) = \gamma_{3}(\varepsilon) = k(1 +\varepsilon + \cdots + \varepsilon^{n-1}),$$
where $k=\max(k_{1},\, k_{2})$.
\end{remark}
\begin{remark}
In the paper, we deals with the more general systems where the nonlinear function is not necessarily Lipschitz.
\end{remark}
\subsection{Observer design}
A tenth of researchers has studied the  design problem of the observer for example in
 \cite{germani}, \cite{ibrir}, \cite{omar} and references
therein. For nonlinear systems having triangular structures, the global asymptotic stability
is proved using a high-gain parameterized
linear controller in \cite{ibrir}.
Under some condition, \cite{germani} investigated the problem for exponential observation for nonlinear delay systems.
 \cite{omar} presented
observer design for a class of nonlinear system with bounded time-varying delay.

To complete the description of system \eqref{3}, the following assumption is considered.\\
$\textbf{A2} $ For $t \geq 0,$ the time delay $\tau(t)=\tau$ is known and constant.

In this subsection, under constant and known time delay, we  present delay-independent conditions to ensure exponential
 convergence of the observation error. To define the nonlinear time-delay observer for system \eqref{3}
  under assumptions $\textbf{A1}$ and $\textbf{A2}$, The following state observer is proposed:
\begin{equation}
\left\{
  \begin{array}{ll}
    \dot{\hat{x}}(t) = A\hat{x}(t) +f(\hat{x}(t),\hat{x}^{\tau},u(t),u^{\tau})  + L(\varepsilon)(C\hat{x}(t) -y(t)), & \hbox{} \\
    \hat{y}(t) = C\hat{x}(t) , & \hbox{}
  \end{array}
\right.
\label{chpx}\end{equation}
where $L(\varepsilon)= [\frac{l_{1}}{\varepsilon},\ldots,
\frac{l_{n}}{\varepsilon^{n}}]^{T}$ and $L= [l_{1},\ldots,l_{n}]^{T}$ such that
$A_{L}:=A+LC$ is Hurwitz. Let $P$ be the symmetric positive definite solution of the Lyapunov equation
\begin{equation} A^{T}_{L}P + PA_{L} = -I\label{Ly2}\end{equation}

\begin{theorem}\label{theorem1}
Consider the time-delay system \eqref{3} under assumptions $\textbf{A1.}$ and $\textbf{A2.}$. Suppose
that there exists $\varepsilon > 0$ such that
\begin{equation}\frac{\lambda_{\min}(P)}{\varepsilon\|P\|}
-2 n\gamma_{1}(\varepsilon) \| P\| -n^{2}\gamma_{2}^{2}(\varepsilon)\| P\|^{2}-1>0\label{condi1}\end{equation}
Then, system \eqref{chpx} is a globally  exponential observer
for system \eqref{3}.
\end{theorem}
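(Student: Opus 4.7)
The plan is to follow the classical high-gain observer scheme and combine it with a Lyapunov--Krasovskii estimate tailored to the constant delay $\tau$. First I would introduce the observation error $e(t):=\hat{x}(t)-x(t)$; subtracting \eqref{3} from \eqref{chpx} gives
\[ \dot{e}(t) = (A+L(\varepsilon)C)\,e(t) + \Delta f(t), \qquad \Delta f := f(\hat{x},\hat{x}^{\tau},u,u^{\tau}) - f(x,x^{\tau},u,u^{\tau}). \]
The standard high-gain rescaling $z_{i}:=\varepsilon^{i-1}e_{i}$ (i.e.\ $z=\Delta_{\varepsilon}e$ with $\Delta_{\varepsilon}=\mathrm{diag}(1,\varepsilon,\ldots,\varepsilon^{n-1})$), together with the companion structure of $A$, converts this into
\[ \dot{z}(t) = \tfrac{1}{\varepsilon}A_{L}\,z(t) + \widetilde{\Delta f}(t), \qquad \widetilde{\Delta f}_{i} := \varepsilon^{i-1}\Delta f_{i}, \]
where $A_{L}=A+LC$ is Hurwitz by construction and all delay dependence is carried by $\widetilde{\Delta f}$ through the components $\hat{x}^{\tau}-x^{\tau}$.

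For the Lyapunov analysis I would take the Krasovskii candidate
\[ V(z_{t}) := z(t)^{T}Pz(t) + \int_{t-\tau}^{t}\|z(s)\|^{2}\,ds. \]
Differentiating along trajectories and invoking \eqref{Ly2} gives $\dot{V}=-\tfrac{1}{\varepsilon}\|z\|^{2}+2z^{T}P\widetilde{\Delta f}+\|z\|^{2}-\|z^{\tau}\|^{2}$. Assumption A1, applied directly with $x=\hat{x}$, $y=x$, $\bar{x}=\hat{x}^{\tau}$, $\bar{y}=x^{\tau}$ and common inputs, yields the clean bound $\|\widetilde{\Delta f}\|_{1}\le \gamma_{1}(\varepsilon)\|z\|_{1}+\gamma_{2}(\varepsilon)\|z^{\tau}\|_{1}$, since the rescaling is designed so that $\sum_{i}\varepsilon^{i-1}|e_{i}|=\|z\|_{1}$. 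Combining Cauchy--Schwarz $|z^{T}P\widetilde{\Delta f}|\le \|P\|\|z\|\|\widetilde{\Delta f}\|$, the norm equivalence $\|\cdot\|_{1}\le n\|\cdot\|_{2}$, and Young's inequality $2ab\le a^{2}+b^{2}$ on the cross product $\|z\|\|z^{\tau}\|$, I arrive at
\[ 2\,z^{T}P\widetilde{\Delta f}\le \bigl(2n\gamma_{1}(\varepsilon)\|P\|+n^{2}\gamma_{2}^{2}(\varepsilon)\|P\|^{2}\bigr)\|z\|^{2}+\|z^{\tau}\|^{2}. \]
The $\|z^{\tau}\|^{2}$ terms then cancel against the Krasovskii integral, leaving $\dot{V}\le\bigl[-\tfrac{1}{\varepsilon}+2n\gamma_{1}\|P\|+n^{2}\gamma_{2}^{2}\|P\|^{2}+1\bigr]\|z(t)\|^{2}$.

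Since $\lambda_{\min}(P)/\|P\|\le 1$, the coefficient $-1/\varepsilon$ can be replaced by the smaller $-\lambda_{\min}(P)/(\varepsilon\|P\|)$ in this upper bound, and hypothesis \eqref{condi1} then forces the resulting bracket to be strictly negative; consequently $\dot{V}\le -c\|z(t)\|^{2}$ for some $c>0$. A standard reweighting of the Krasovskii functional, e.g.\ $z^{T}Pz+\mu\int_{t-\tau}^{t}e^{\alpha(s-t)}\|z(s)\|^{2}\,ds$, or an equivalent comparison argument, upgrades this to $\dot{V}\le -\lambda V$ and yields $\|z(t)\|\le \beta\|z\|_{\infty}e^{-\lambda t}$; reverting via $\|e\|\le\|\Delta_{\varepsilon}^{-1}\|\|z\|$ gives the announced global exponential observer property. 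I expect the most delicate point to be this final calibration: absorbing $\|z^{\tau}\|^{2}$ cleanly into the Krasovskii functional while keeping the constants tight enough so that the quantity $\lambda_{\min}(P)/(\varepsilon\|P\|)$ in \eqref{condi1} actually governs the decay rate of $V$.
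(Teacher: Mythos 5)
Your proposal is correct and follows essentially the same route as the paper: the same high-gain rescaling $\eta=D(\varepsilon)e$, the same Krasovskii functional (the paper simply builds the exponential weight $e^{\frac{\sigma}{\tau}(s-t)}$ into the integral term from the outset), the same use of A1 with the $\ell^{1}$--$\ell^{2}$ comparison giving the factors $n\gamma_{1}$ and $n\gamma_{2}$, and the same Young/completing-the-square treatment of the cross term $\|\eta\|\,\|\eta^{\tau}\|$ leading exactly to condition \eqref{condi1}. The one step you defer --- upgrading $\dot V\le -c\|z\|^{2}$ to $\dot V\le -\lambda V$ via the exponential reweighting --- is precisely what the paper carries out by choosing $\sigma$ small through a continuity argument at $\sigma=0$, the strict inequality in \eqref{condi1} providing the required slack, so your sketch closes without difficulty.
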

\begin{proof}
Denote $e=\hat{x}-x$ the observation error.
 We have \begin{equation}\dot{e}=(A+L(\varepsilon)C)e+f(\hat{x},\hat{x}^{\tau},u,u^{\tau})-f(x,x^{\tau},u,u^{\tau})\label{erro}\end{equation}
 For $\varepsilon>0$, let $D(\varepsilon)=diag[1,\varepsilon,\ldots,\varepsilon^{n-1}]$.
Let $\eta=D(\varepsilon)e$. Using the fact that $A+L(\varepsilon)C=
\frac{1}{\varepsilon}D(\varepsilon)^{-1}A_{L}D(\varepsilon),$ we get
\begin{equation}\dot{\eta}=\frac{1}{\varepsilon}A_{L}\eta+D(\varepsilon)(f(\hat{x},\hat{x}^{\tau},u,u^{\tau})-f(x,x^{\tau}u,u^{\tau}))\label{eta}
 \end{equation}
Let us choose a Lyapunov-Krasovskii functional candidate as follows
\begin{equation} V(\eta_{t})= V_{1}(\eta_{t})+ V_{2}(\eta_{t})\label{veta}\end{equation}
with $$V_{1}(\eta_{t})=\eta^{T} P\eta$$ and
$$ V_{2}(\eta_{t}) = \displaystyle\int_{t-\tau}^{t}e^{\frac{\sigma}{\tau}( s-t)}\|\eta(s)\|^{2}ds$$
with $\sigma$ a positive constant defined thereafter.\\
The time derivative of $V_{1}(\eta_{t} )$ along the trajectories of
system \eqref{eta} is
$$\dot{V}_{1} (\eta_{t})=\frac{1}{\varepsilon}  \eta^{T} ( A^{T}_{L}P + PA_{L} )\eta+
2\eta^{T} PD(\varepsilon)(f(\hat{x},\hat{x}^{\tau},u,u^{\tau})-f(x,x^{\tau}u,u^{\tau}))$$
The time derivative of $V_{2}(\eta_{t} )$ along the trajectories of
system \eqref{eta} is
$$\dot{V}_{2} (\eta_{t})=\| \eta\|^{2}
-e^{-\sigma }\|\eta^{\tau}\|^{2}
-\frac{\sigma}{ \tau}\displaystyle\int_{t-\tau}^{t}  e^{\frac{\sigma }{\tau}(s-t)}\|\eta(s)\|^{2}ds$$
Since $P$ is symmetric positive definite, for all $\eta\in\mathbb{R}^{n},$
\begin{equation}\lambda_{\min}(P)\|\eta\|^{2}\leq \eta^{T}P\eta\leq \lambda_{\max}(P)\|\eta\|^{2}\label{norm}\end{equation}
Taking the time derivative of \eqref{veta} along the trajectories of \eqref{eta}, and making use of \eqref{Ly2} and \eqref{norm}, we have
$$\begin{array}{lll}
\dot{V} (\eta_{t})
&\leq& -\frac{1}{\varepsilon\|P\|}V_{1}(\eta)+
2\|\eta \|\| P\|\| D(\varepsilon)(f(\hat{x},\hat{x}^{\tau},u,u^{\tau})-f(x,x^{\tau}u,u^{\tau}))\|\\\\
& &+\|\eta \|^{2}-e^{-\sigma}\|\eta^{\tau}\|^{2}
-\frac{\sigma}{ \tau}V_{2}(\eta).\end{array}$$
Now \begin{equation}\|D(\varepsilon)(f(\hat{x},\hat{x}^{\tau},u,u^{\tau})-f(x,x^{\tau}u,u^{\tau}))\|
\leq \displaystyle\sum_{i=1}^{n}\varepsilon^{i-1}|f_{i}(\hat{x},\hat{x}^{\tau},u,u^{\tau})-f_{i}(x,x^{\tau},u,u^{\tau})|.\label{dd}\end{equation}
So using assumption $\textbf{A1.}$ equation\eqref{A1}, we get $$\begin{array}{lll}
\|D(\varepsilon)(f(\hat{x},\hat{x}^{\tau},u,u^{\tau})-f(x,x^{\tau}u,u^{\tau}))\|&\leq&
\gamma_{1}(\varepsilon)\displaystyle\sum_{i=1}^{n}\varepsilon^{i-1}|\hat{x}_{i}-x_{i}|+\gamma_{2}(\varepsilon)
\displaystyle\sum_{i=1}^{n}\varepsilon^{i-1}|\hat{x}_{i}^{\tau}-x_{i}^{\tau}|\\
&\leq &n\gamma_{1}(\varepsilon) \| D(\varepsilon)e\|
+n\gamma_{2}(\varepsilon) \| D(\varepsilon)e^{\tau}\|.\end{array}$$
Thus \begin{equation}\|D(\varepsilon)(f(\hat{x},\hat{x}^{\tau},u,u^{\tau})-f(x,x^{\tau}u,u^{\tau}))\| \leq
n\gamma_{1}(\varepsilon) \| \eta\| +n\gamma_{2}(\varepsilon) \| \eta^{\tau}\| \label{7}
\end{equation}
Thus, we have that $$\begin{array}{lll}
\dot{V} (\eta_{t})+\frac{\sigma}{\tau} V(\eta_{t})&\leq& -(\frac{1}{\varepsilon\|P\|}-\frac{\sigma}{\tau})V_{1}(\eta)
+2 n\gamma_{1}(\varepsilon) \| P\|  \| \eta\|^{2}\\\\
& &+ 2n\gamma_{2}(\varepsilon)
\| P\|  \| \eta \|  \| \eta^{\tau}\|+\|\eta\|^{2}- e^{-\sigma }\|\eta^{\tau}\|^{2}\\\\
&\leq& -\left\{\lambda_{\min}(P)(\frac{1}{\varepsilon\|P\|}-\frac{\sigma}{\tau})-2 n\gamma_{1}(\varepsilon) \| P\| -1\right\}\| \eta\|^{2}\\\\
& &+2n\gamma_{2}(\varepsilon)\| P\|  \| \eta \|  \| \eta^{\tau}\|- e^{-\sigma }\|\eta^{\tau}\|^{2}
\end{array}$$
Hence, we have that\begin{equation}\dot{V} (\eta_{t})+\frac{\sigma}{\tau} V(\eta_{t})\leq -a(\sigma,\varepsilon) \|\eta\|^{2}+
b(\varepsilon)\|\eta \|  \| \eta^{\tau}\|- e^{-\sigma }\|\eta^{\tau}\|^{2}\label{v+v}\end{equation}
with
$$\begin{array}{lll}a(\sigma,\varepsilon)&=&\lambda_{\min}(P)(\frac{1}{\varepsilon\|P\|}-\frac{\sigma}{\tau})-2 n\gamma_{1}(\varepsilon) \| P\| -1,\\\\
\ \ b(\varepsilon)&=&2n\gamma_{2}(\varepsilon)\| P\|. \end{array}$$
Now, the right side of the  inequality \eqref{v+v} can be rewritten as follows
$$-a(\sigma,\varepsilon) \|\eta\|^{2}+
b(\varepsilon)\|\eta \|  \| \eta^{\tau}\|- e^{-\sigma }\|\eta^{\tau}\|^{2}=-\left(a(\sigma,\varepsilon)-\frac{b^{2}(\varepsilon)}{4}e^{\sigma}\right)
\|\eta\|^{2}-\left(\frac{b(\varepsilon)}{2}e^{\frac{\sigma}{2} }\|\eta\|-e^{-\frac{\sigma}{2} }\|\eta^{\tau}\|\right)^{2}$$
To satisfy inequality \eqref{v+v}, all we need to do is to choose $\sigma$ such that
$$\left(a(\sigma,\varepsilon)-\frac{b^{2}(\varepsilon)}{4}e^{\sigma}\right)>0,$$
which is equivalent to
\begin{equation} \lambda_{\min}(P)\frac{\sigma}{\tau}+\frac{b^{2}(\varepsilon)}{4}(e^{\sigma}-1)<\frac{\lambda_{\min}(P)}{\varepsilon\|P\|}
-2 n\gamma_{1}(\varepsilon) \| P\| -1-\frac{b^{2}(\varepsilon)}{4}\label{sigma}\end{equation}

Let $w(x)=\lambda_{\min}(P)\frac{x}{\tau}+\frac{b^{2}(\varepsilon)}{4}(e^{x}-1)$, we have $w(x) > 0\ \forall x > 0$ and $w(0) = 0$.\\
Since $w$ is continuous at $0$, there exists $\delta > 0$ such that $\forall x\in]0, \delta[,
0 < w(x) <\frac{\lambda_{\min}(P)}{\varepsilon\|P\|}
-2 n\gamma_{1}(\varepsilon) \| P\| -\frac{4+b^{2}(\varepsilon)}{4}.$
Let $\sigma \in]0, \delta[,$ then inequality \eqref{sigma} is verified.

Now, the objective is to prove the exponential convergence of the observer \eqref{erro}.\\
Using \eqref{condi1}, inequality \eqref{v+v} becomes
$$\dot{V}(\eta_{t})\leq-\frac{\sigma}{\tau} V(\eta_{t})$$
It follows that\begin{equation}V(\eta_{t})\leq e^{-\frac{\sigma}{\tau}t} V(\eta_{t}(0))\label{gronw}\end{equation}
Using \eqref{veta} and \eqref{norm}, we have, on the one hand,
$$\begin{array}{lll}
V (\eta_{t}(0))&\leq& \lambda_{\max}(P)\|\eta_{t}(0)\|^{2}+\displaystyle\int_{-\tau}^{0}e^{\frac{\sigma}{\tau}s}\|\eta(s)\|^{2}ds\\\\
&\leq &(\lambda_{\max}(P)+\tau)\displaystyle\sup_{s\in[-\tau,0]}\|\eta(s)\|^{2}
\end{array}$$
and on the other hand,$$\lambda_{\min}(P)\|\eta_{t}\|^{2}\leq V (\eta_{t}).$$
We deduce that $$\|\eta_{t}\|\leq \sqrt{\frac{\|P\|+\tau}{\lambda_{\min}(P)}}e^{\frac{-\sigma}{2\tau}t}\displaystyle\sup_{s\in[-\tau,0]}\|\eta(s)\|.$$
Finally, with $\eta = D(\varepsilon)e$, the observation error $e(t)$ is given by
$$\|e(t)\|\leq \frac{1}{\|D(\varepsilon)\|}\sqrt{\frac{\|P\|+\tau}{\lambda_{\min}(P)}}
e^{\frac{-\sigma}{2\tau}t}\displaystyle\sup_{s\in[-\tau,0]}\|\eta(s)\|.$$
Then, the error dynamics \eqref{erro} is globally exponentially stable.\end{proof}
\begin{remark}
In \cite{zhou}, based on linear matrix inequalities, the authors developed the sufficient conditions which guarantee
 the estimation error converge asymptotically towards zero.
As compared to   \cite{zhou}, our results are less conservative and more convenient to use since they are independent of time delays.
\end{remark}
\begin{remark}
 \cite{Dong:2012}, proposed a state feedback controller that are synthesized under sufficient conditions expressed in terms of Riccati differential
 equations and linear matrix inequalities which can stabilize the studied nonlinear uncertain systems with time-varying delay.
  Feedback controllers are synthesized under sufficient conditions linear matrix inequalities and expressed in terms of
 Riccati differential equations. But, in this paper, we use a parameter in order to establish global asymptotical stability of the nonlinear system.
\end{remark}
\subsection{Practical exponential stability}
In this section, we give sufficient conditions to ensure the  practical stability convergence.
In fact, in the real world, the problem of practical stability is more appropriate.
 Then, for practical purpose,
 practical stability seems desirable ( see \cite{amel2009} ) for systems without delays
and \cite{raul},\cite{hamed} and \cite{ghanes} for time-delay systems.
In the general case, one can not directly measure the states of a system.
Thus, one must observe the unmeasured states.
An observer is a dynamical system which estimates
the states of the system.

In this section, for complete the description of system \eqref{3} the following assumptions are needed.

$\textbf{H1.}$ The state and the input are considered bounded, that
is $x(t)\in K\subset\mathbb{R}^{n}$( that is a compact subset of $\mathbb{R}^{n})$.

$\textbf{H2.}$ The time-varying delay satisfies the following properties:
\begin{description}
  \item[(i)]
 $\exists\  \tau^{*} > 0$, such that $0\leq\tau(t) \leq\tau^{*}$.
\item[(ii)] $\exists\ \beta > 0$, such that $ \dot{\tau}(t) \leq1- \beta$.\end{description}
\begin{remark}The boundedness of the state excludes implicitly all initial conditions that
generate unbounded state.
\end{remark}

The following state observer for system \eqref{3} under assumption $\textbf{H1.}, \textbf{H2.}$ and $\textbf{A1.}$ is proposed:
\begin{equation}
\left\{
  \begin{array}{ll}
    \dot{\hat{x}}(t) = A\hat{x}(t) +f(\hat{x}(t),\hat{x}^{\tau^{*}},u(t),u^{\tau^{*}})  + L(\varepsilon)(C\hat{x}(t) -y(t)), & \hbox{} \\
    \hat{y}(t) = C\hat{x}(t) , & \hbox{}
  \end{array}
\right.\label{chapx2}\end{equation}
Let us now define $e=\hat{x}-x$ the observation error, which
denotes the difference between the actual state and
estimated states.
\begin{theorem}\label{th2}
Consider the time-delay system \eqref{3} under assumptions $\textbf{H1.}, \textbf{H2.}$ and $\textbf{A1.}$. Suppose
that there exists $\varepsilon > 0$ such that
\begin{equation}\frac{\lambda_{\min}(P)}{\varepsilon\|P\|}
-2 n\gamma_{1}(\varepsilon) \| P\|-\frac{5}{4} -n^{2}\gamma_{2}^{2}(\varepsilon)\| P\|^{2}>0\label{condi2}\end{equation}
Then, the error dynamics \eqref{erro2} is globally (on $K$) practically exponentially
stable.
\end{theorem}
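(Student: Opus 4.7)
The plan is to mimic the structure of the proof of Theorem \ref{theorem1}, but to account for two new ingredients: (a) the mismatch between the guess $\tau^{*}$ used by the observer and the actual, time-varying delay $\tau(t)$, and (b) the need for a Lyapunov--Krasovskii term whose upper limit is $t-\tau(t)$ rather than $t-\tau$. First I would write the error dynamics
$$\dot{e}=(A+L(\varepsilon)C)e+f(\hat{x},\hat{x}^{\tau^{*}},u,u^{\tau^{*}})-f(x,x^{\tau(t)},u,u^{\tau(t)}),$$
apply the rescaling $\eta=D(\varepsilon)e$ as before, and split the perturbation via a telescoping term $f(x,x^{\tau^{*}},u,u^{\tau^{*}})$:
$$f(\hat{x},\hat{x}^{\tau^{*}},u,u^{\tau^{*}})-f(x,x^{\tau(t)},u,u^{\tau(t)})=\bigl[f(\hat{x},\hat{x}^{\tau^{*}},u,u^{\tau^{*}})-f(x,x^{\tau^{*}},u,u^{\tau^{*}})\bigr]+\bigl[f(x,x^{\tau^{*}},u,u^{\tau^{*}})-f(x,x^{\tau(t)},u,u^{\tau(t)})\bigr].$$
The first bracket is estimated exactly as in (\ref{7}) via the first line of \textbf{A1}, giving a bound $n\gamma_{1}(\varepsilon)\|\eta\|+n\gamma_{2}(\varepsilon)\|\eta^{\tau^{*}}\|$. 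The second bracket is controlled by the second line of \textbf{A1} and produces terms proportional to $\|x^{\tau^{*}}-x^{\tau(t)}\|$ and $\|u^{\tau^{*}}-u^{\tau(t)}\|$; here \textbf{H1} is essential, since the boundedness of $x(\cdot)$ on the compact $K$ and of $u(\cdot)$ turns these mismatch terms into a \emph{bounded disturbance} $\theta(\varepsilon)>0$ that no longer depends on the trajectory.

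Second, I would pick a Lyapunov--Krasovskii candidate tailored to a time-varying delay,
\begin{equation*}
V(\eta_{t})=\eta^{T}P\eta+\int_{t-\tau(t)}^{t} e^{\frac{\sigma}{\tau^{*}}(s-t)}\|\eta(s)\|^{2}\,ds,
\end{equation*}
with $\sigma>0$ to be chosen. Differentiating along (\ref{eta})-type dynamics and using the Lyapunov equation (\ref{Ly2}), the derivative of the integral term generates the factor $(1-\dot{\tau}(t))\|\eta^{\tau(t)}\|^{2}$; by \textbf{H2}(ii) this factor is at least $\beta$, which is exactly what is needed to absorb the cross term $2n\gamma_{2}(\varepsilon)\|P\|\|\eta\|\|\eta^{\tau^{*}}\|$ via the same Young-type completion of the square used after (\ref{v+v}), once one also accounts for the $\|\eta^{\tau^{*}}\|^{2}$ contribution produced by the $D(\varepsilon)$-scaled perturbation. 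The disturbance $\theta(\varepsilon)$ coming from the delay mismatch would then be handled by a further Young inequality, $2\|P\|\|\eta\|\theta(\varepsilon)\leq \tfrac{1}{4}\|\eta\|^{2}+4\|P\|^{2}\theta^{2}(\varepsilon)$, which explains the extra $\tfrac{5}{4}$ appearing in the stability condition (\ref{condi2}) (compared with the $1$ in (\ref{condi1})).

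Third, collecting terms, one reaches a differential inequality of the form
$$\dot{V}(\eta_{t})+\frac{\sigma}{\tau^{*}}V(\eta_{t})\leq -c(\varepsilon)\|\eta\|^{2}+\mu(\varepsilon),$$
where $c(\varepsilon)>0$ is guaranteed by (\ref{condi2}) and $\mu(\varepsilon)=4\|P\|^{2}\theta^{2}(\varepsilon)$ is a constant independent of $t$. A Gronwall argument then yields
$V(\eta_{t})\leq V(\eta_{0})e^{-\frac{\sigma}{\tau^{*}}t}+\frac{\tau^{*}\mu(\varepsilon)}{\sigma},$
and using the two-sided bound $\lambda_{\min}(P)\|\eta_{t}\|^{2}\leq V(\eta_{t})\leq(\lambda_{\max}(P)+\tau^{*})\sup_{s\in[-\tau^{*},0]}\|\eta(s)\|^{2}+\text{const}$ together with the inversion $\|e\|\leq\|D(\varepsilon)\|^{-1}\|\eta\|$ gives a bound of the form $\|e(t)\|\leq r+\lambda_{1}\|\eta(0)\|_{\infty}e^{-\lambda_{2}t}$, which is precisely the practical exponential stability of the ball $B_{r}$ in Definition 2, with $r$ determined by $\mu(\varepsilon)/\sigma$.

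The routine part is the Lyapunov bookkeeping, which parallels Theorem \ref{theorem1} almost line by line. The main obstacle, and the genuinely new step, is the delay-mismatch estimate: one must identify and quantify the disturbance $\theta(\varepsilon)$ produced by substituting $\tau^{*}$ for $\tau(t)$ inside $f$, and one must verify that (i) the boundedness assumption \textbf{H1} makes this $\theta(\varepsilon)$ finite and independent of $t$, and (ii) the slow-variation hypothesis \textbf{H2}(ii) leaves a strictly positive coefficient $\beta$ in front of $\|\eta^{\tau(t)}\|^{2}$ so that the completion-of-squares argument of Theorem \ref{theorem1} still closes under the slightly strengthened condition (\ref{condi2}).
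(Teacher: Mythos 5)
Your proposal is correct and follows essentially the same route as the paper's own proof: the same telescoping decomposition of the perturbation through $f(x,x^{\tau^{*}},u,u^{\tau^{*}})$, the same Lyapunov--Krasovskii functional with integral over $[t-\tau(t),t]$ and weight $e^{\sigma(s-t)/\tau^{*}}$, the use of \textbf{H1} to turn the delay mismatch into a bounded disturbance $\theta$, \textbf{H2}(ii) to retain the coefficient $\beta$, the Young inequality producing the extra $\tfrac{1}{4}$ in \eqref{condi2}, and the Gronwall-plus-inversion step at the end. The only (shared) imprecision is that the negative term from the functional is evaluated at $t-\tau(t)$ while the cross term being absorbed involves $\eta^{\tau^{*}}$; the paper glosses over this in exactly the same way, so your argument matches it line for line.
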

\begin{proof}
 We have
\begin{equation}\dot{e}=(A+L(\varepsilon)C)e+f(\hat{x},\hat{x}^{\tau^{*}},u,u^{\tau^{*}})-f(x,x^{\tau(t)},u,u^{\tau(t)})\label{erro2}\end{equation}
 For $\varepsilon>0$, let $D(\varepsilon)=diag[1,\varepsilon,\ldots,\varepsilon^{n-1}]$.
Let $\eta=D(\varepsilon)e$. Using the fact that $A+L(\varepsilon)C=
\frac{1}{\varepsilon}D(\varepsilon)^{-1}A_{L}D(\varepsilon),$ we get
\begin{equation}
\dot{\eta}=\frac{1}{\varepsilon}A_{L}\eta+D(\varepsilon)(f(\hat{x},\hat{x}^{\tau^{*}},u,u^{\tau^{*}})-f(x,x^{\tau(t)},u,u^{\tau(t)}))\label{eta2}
\end{equation}
Let us choose a Lyapunov-Krasovskii functional candidate as follows
\begin{equation}
 W(t,\eta_{t})= \eta^{T} P\eta+ \displaystyle\int_{t-\tau(t)}^{t}e^{\frac{\sigma}{\tau^{*}}( s-t)}\|\eta(s)\|^{2}ds\label{veta2}\end{equation}
where $P$ is provided by \eqref{Ly2} and $\sigma$ a positive constant defined thereafter.\\
The time derivative of $W(t,\eta_{t} )$ along the trajectories of
system \eqref{eta2} is
$$\begin{array}{lll}\dot{W}(t,\eta_{t})&=&\frac{1}{\varepsilon}  \eta^{T} ( A^{T}_{L}P + PA_{L} )\eta+
2\eta^{T} PD(\varepsilon)(f(\hat{x},\hat{x}^{\tau^{*}},u,u^{\tau^{*}})-f(x,x^{\tau(t)},u,u^{\tau(t)}))\\\\
& &+\| \eta\|^{2} -(1-\dot{\tau}(t))e^{-\frac{\sigma \tau(t)}{\tau^{*}} }\|\eta^{\tau}\|^{2}
-\frac{\sigma}{ \tau}\displaystyle\int_{t-\tau(t)}^{t}  e^{\frac{\sigma }{\tau^{*}}(s-t)}\|\eta(s)\|^{2}ds.\end{array}$$
As in the proof of Theorem \ref{theorem1} and using \eqref{Ly2}, \eqref{norm} and assumption $\textbf{H2.}$, we have
$$\begin{array}{lll}
\dot{W} (t,\eta_{t})+\frac{\sigma}{\tau^{*}}W (t,\eta_{t})
&\leq& -\left\{\lambda_{\min}(P)(\frac{1}{\varepsilon\|P\|}-\frac{\sigma}{\tau^{*}})-1\right\}\| \eta\|^{2}-\beta e^{-\sigma}\|\eta^{\tau^{*}}\|^{2}\\\\
& &+2\|\eta\|\| P\|\|D(\varepsilon)(f(\hat{x},x^{\tau^{*}},u,u^{\tau^{*}})-f(x,x^{\tau(t)},u,u^{\tau(t)}))\|.\end{array}$$
Now, the majorization of the term $\|D(\varepsilon)(f(\hat{x},\hat{x}^{\tau^{*}},u,u^{\tau^{*}})-f(x,x^{\tau(t)},u,u^{\tau(t)}))\|.$
Characterizes the difference between the term that depends on the upper bound of the unknown delay and the term which depends on the unknown delay.
$$\begin{array}{lll}\|D(\varepsilon)(f(\hat{x},\hat{x}^{\tau^{*}},u,u^{\tau^{*}})-f(x,x^{\tau(t)},u,u^{\tau(t)}))\|&\leq&
\|D(\varepsilon)(f(\hat{x},\hat{x}^{\tau^{*}},u,u^{\tau^{*}})-f(x,x^{\tau^{*}},u,u^{\tau^{*}}))\|\\\\
&+ &\|D(\varepsilon)(f(x,x^{\tau^{*}},u,u^{\tau^{*}})-f(x,x^{\tau(t)},u,u^{\tau(t)}))\|
.\end{array}$$
Using \eqref{dd} and assumption $\textbf{A1.}$, on the one hand, we get
$$\begin{array}{lll}
\|D(\varepsilon)(f(\hat{x},\hat{x}^{\tau^{*}},u,u^{\tau^{*}})-f(x,x^{\tau^{*}},u,u^{\tau^{*}}))\|&\leq&
\gamma_{1}(\varepsilon)\displaystyle\sum_{i=1}^{n}\varepsilon^{i-1}|\hat{x}_{i}-x_{i}|+\gamma_{2}(\varepsilon)
\displaystyle\sum_{i=1}^{n}\varepsilon^{i-1}|\hat{x}_{i}^{\tau^{*}}-x_{i}^{\tau^{*}}|\\
&\leq &n\gamma_{1}(\varepsilon) \| D(\varepsilon)e\|
+n\gamma_{2}(\varepsilon) \| D(\varepsilon)e^{\tau^{*}}\|.\end{array}$$
Thus \begin{equation}\|D(\varepsilon)(f(\hat{x},\hat{x}^{\tau^{*}},u,u^{\tau^{*}})-f(x,x^{\tau^{*}},u,u^{\tau^{*}}))\| \leq
n\gamma_{1}(\varepsilon) \| \eta\| +n\gamma_{2}(\varepsilon) \| \eta^{\tau^{*}}\|, \label{major}
\end{equation}
and on the other hand,
\begin{equation}
\|D(\varepsilon)(f(x,x^{\tau^{*}},u,u^{\tau^{*}})-f(x,x^{\tau(t)},u,u^{\tau(t)}))\|
\leq \gamma_{2}(\varepsilon)\displaystyle\sum_{i=1}^{n}\varepsilon^{i-1}|x^{\tau^{*}}_{i}-x_{i}^{\tau(t)}|+\gamma_{3}(\varepsilon)
\displaystyle\sum_{i=1}^{n}\varepsilon^{i-1}|u_{i}^{\tau^{*}}-u_{i}^{\tau(t)}|.\label{a2}\end{equation}
From assumption $\textbf{H1.}$, there exists a bounded constant $\nu_{1}$ and $\nu_{2}$ such that \eqref{a2} can be written as follows:
\begin{equation*}
\|D(\varepsilon)(f(x,x^{\tau^{*}},u,u^{\tau^{*}})-f(x,x^{\tau(t)},u,u^{\tau(t)}))\|
\leq \gamma_{2}(\varepsilon)\frac{1-\varepsilon^{n}}{1-\varepsilon}\nu_{1}+\gamma_{3}(\varepsilon)
\frac{1-\varepsilon^{n}}{1-\varepsilon}\nu_{2}.\end{equation*}
where $\nu_{1}$ and $\nu_{2}$ are respectively, the positive constant which refers to the boundedness of $\|x^{\tau^{*}}-x^{\tau(t)}\|$ and
 $\|u^{\tau^{*}}-u^{\tau(t)}\|$. Let
$$\theta=\left\{
 \begin{array}{ll}
2\|P\|(\gamma_{2}(\varepsilon)+\gamma_{3}(\varepsilon))\frac{1-\varepsilon^{n}}{1-\varepsilon}\nu, \ \ \ \ if\ \varepsilon\neq1& \hbox{;} \\
2\|P\|(\gamma_{2}(\varepsilon)+\gamma_{3}(\varepsilon))n\nu, \ \ \ \ \ \ \ \ \ if\ \varepsilon=1& \hbox{.}
\end{array}
 \right.$$where $\nu=\max(\nu_{1},\nu_{2})$.
Thus \begin{equation}
2\|P\|\|D(\varepsilon)(f(x,x^{\tau^{*}},u,u^{\tau^{*}})-f(x,x^{\tau(t)},u,u^{\tau(t)}))\|
\leq \theta.\label{a3}\end{equation}
Using \eqref{major} and \eqref{a3}, we have  $$\begin{array}{lll}
\dot{W} (t,\eta_{t})+\frac{\sigma}{\tau^{*}}W (t,\eta_{t})
&\leq& -\left\{\lambda_{\min}(P)(\frac{1}{\varepsilon\|P\|}-\frac{\sigma}{\tau^{*}})-2 n\gamma_{1}(\varepsilon) \| P\| -1\right\}\| \eta\|^{2}\\\\
& &+\|\eta\|\theta+2n\gamma_{2}(\varepsilon)\| P\|  \| \eta \|  \| \eta^{\tau^{*}}\|- \beta e^{-\sigma }\|\eta^{\tau^{*}}\|^{2}
\end{array}$$
Using the fact that $$ \theta\|\eta\|\leq \frac{1}{4}\|\eta\|^{2}+\theta^{2}$$
we deduce that
\begin{equation}
\dot{W} (t,\eta_{t})+\frac{\sigma}{\tau^{*}}W (t,\eta_{t})-\theta^{2}
\leq -c(\sigma,\varepsilon) \|\eta\|^{2}+
b(\varepsilon)\|\eta \| \| \eta^{\tau^{*}}\|- \beta e^{-\sigma }\|\eta^{\tau^{*}}\|^{2}\label{w+sigw}
\end{equation}
with
$$\begin{array}{lll}c(\sigma,\varepsilon)&=&\lambda_{\min}(P)(\frac{1}{\varepsilon\|P\|}-\frac{\sigma}{\tau^{*}})-
2 n\gamma_{1}(\varepsilon) \| P\|-\frac{5}{4},\\\\
\ \ b(\varepsilon)&=&2n\gamma_{2}(\varepsilon)\| P\|. \end{array}$$
Now, the right side of the inequality \eqref{w+sigw} can be rewritten as follows
$$-c(\sigma,\varepsilon) \|\eta\|^{2}+
b(\varepsilon)\|\eta \|  \| \eta^{\tau^{*}}\|- \beta e^{-\sigma }\|\eta^{\tau^{*}}\|^{2}
=-\left(c(\sigma,\varepsilon)-\frac{b^{2}(\varepsilon)}{4\beta}e^{\sigma}\right)
\|\eta\|^{2}-\left(\frac{b(\varepsilon)}{2\sqrt{\beta}}e^{\frac{\sigma}{2} }\|\eta\|-\sqrt{\beta}e^{-\frac{\sigma}{2} }\|\eta^{\tau^{*}}\|\right)^{2}.$$
To satisfy inequality \eqref{w+sigw}, all we need to do is to choose $\sigma$ such that
$$\left(c(\sigma,\varepsilon)-\frac{b^{2}(\varepsilon)}{4\beta}e^{\sigma}\right)>0,$$
which is equivalent to
\begin{equation} \lambda_{\min}(P)\frac{\sigma}{\tau^{*}}+\frac{b^{2}(\varepsilon)}{4}(e^{\sigma}-1)<\frac{\lambda_{\min}(P)}{\varepsilon\|P\|}
-2 n\gamma_{1}(\varepsilon) \| P\| -\frac{5}{4}-\frac{b^{2}(\varepsilon)}{4\beta}\label{sigma2}\end{equation}
As in the proof of Theorem \ref{theorem1} and
let $\sigma \in]0, \delta[,$ then inequality \eqref{sigma2} is verified.

Now, the objective is to prove the uniform practical stability of \eqref{erro2}.\\
Using \eqref{condi2}, inequality \eqref{w+sigw} becomes
$$\dot{W}(\eta_{t})\leq-\frac{\sigma}{\tau^{*}} W(\eta_{t})+\theta^{2}$$
It follows that\begin{equation}W(\eta_{t})\leq e^{-\frac{\sigma}{\tau^{*}}t} W(\eta_{t}(0))+2\theta^{2}\frac{\tau^{*}}{\sigma}\label{gronw2}\end{equation}
Using  \eqref{norm}, we have, on the one hand,
$$\begin{array}{lll}
W (\eta_{t}(0))&\leq& \lambda_{\max}(P)\|\eta_{t}(0)\|^{2}+\displaystyle\int_{-\tau^{*}}^{0}e^{\frac{\sigma}{\tau}s}\|\eta(s)\|^{2}ds\\\\
&\leq &(\lambda_{\max}(P)+\tau^{*})\displaystyle\sup_{s\in[-\tau,0]}\|\eta(s)\|^{2}
\end{array}$$
and on the other hand,$$\lambda_{\min}(P)\|\eta_{t}\|^{2}\leq V (\eta_{t}).$$
We deduce that $$\|\eta_{t}\|\leq \sqrt{\frac{\|P\|+\tau^{*}}{\lambda_{\min}(P)}}e^{\frac{-\sigma}{2\tau^{*}}t}
\displaystyle\sup_{s\in[-\tau^{*},0]}\|\eta(s)\|+\sqrt{\frac{2\theta^{2}\tau^{*}}{\sigma\lambda_{\min(P)}}}.$$
Finally, with $\eta = D(\varepsilon)e$, the observation error $e(t)$ is given by
$$\|e(t)\|\leq \frac{1}{\|D(\varepsilon)\|}\sqrt{\frac{\|P\|
+\tau^{*}}{\lambda_{\min}(P)}}e^{\frac{-\sigma}{2\tau}t}\displaystyle\sup_{s\in[-\tau,0]}\|\eta(s)\|
+\frac{1}{\|D(\varepsilon)\|}\sqrt{\frac{2\theta^{2}\tau^{*}}{\sigma\lambda_{\min(P)}}}.$$
Then, the error dynamics \eqref{erro2} is globally (on K) practically exponentially stable.\end{proof}
\begin{remark}
It is noted that  if system \eqref{3} satisfies condition \eqref{lips} inspired in \cite{ibrir} and \cite{ghanes},
 then assumption $\textbf{A1.}$ is satisfied with
$\gamma_{1}(\varepsilon) = \gamma_{2}(\varepsilon) = \gamma_{3}(\varepsilon) = k(1 +\varepsilon + \cdots + \varepsilon^{n-1}).$ Let
$$\begin{array}{lll}
c_{1}(\varepsilon)&=&\frac{\lambda_{\min}(P)}{\varepsilon\|P\|}
-2 n\gamma_{1}(\varepsilon) \| P\| -n^{2}\gamma_{2}^{2}(\varepsilon)\| P\|^{2}-1,\\\\
c_{2}(\varepsilon)&=&\frac{\lambda_{\min}(P)}{\varepsilon\|P\|}
-2 n\gamma_{1}(\varepsilon) \| P\| -n^{2}\gamma_{2}^{2}(\varepsilon)\| P\|^{2}-\frac{5}{4}.
\end{array}$$
In this case, $c_{1}(\varepsilon)$ and $c_{2}(\varepsilon)$
tend to $\infty$ as $\varepsilon$ tends to zero. This implies that there exists $\varepsilon_{1} >0$
such that for all $0<\varepsilon <\varepsilon_{1}$ conditions \eqref{condi1} and \eqref{condi2} are fulfilled.
\end{remark}
\section{Numerical example}
 This section presents experimental result, we give an example of the orientational motion of polar molecules acted on by an external perturbation.
 We consider a physical model corresponding to a slow relaxation process. The dynamics model systems are represented by:
\begin{equation}\begin{array}{lll}
\dot{x}_{1}&=&x_{2}+\frac{1}{12}\sin x_{2}(t-\tau(t))+\frac{1}{24}x_{1}\cos(u(t-\tau(t)),\\
\dot{x}_{2}&=&x_{3}+\frac{1}{12}x_{2}(t-\tau(t))+\frac{1}{24}x_{2}+u(t-\tau(t)),\\
y&=&x_{1}.
\end{array}\label{*}\end{equation}
where the input $u(t)=\cos(7t)$ denote the orientational potential energy, the function $\tau(t)$ is defined as follows: $\tau(t)=0.25+0.01\cos^{2}(t)$
being the Debye relaxation time $x(t)$ is the augmented state vector containing the plant state vector.
Following the notation used throughout the paper, let
$f_{1}(x,x^{\tau(t)},u,u^{\tau(t)})=\frac{1}{12}\sin x_{2}(t-\tau(t))+\frac{1}{24}x_{1}\cos(u(t-\tau(t)))$,
$f_{2}(x,x^{\tau(t)},u,u^{\tau(t)})=\frac{1}{12}x_{2}(t-\tau(t))+\frac{1}{24}x_{2}+u(t-\tau(t)$.
Furthermore, since $f_{1}$ depends on $x_{2}^{\tau(t)}$,  the method proposed in \cite{ghanes} is not applicable in this case.
The input and  the states are bounded, which make assumption H1 holds.
It is easy to check that system \eqref{*} satisfies Assumption A1
with $\gamma_{1}(\varepsilon) =\frac{1}{24}(1+\varepsilon),\, \gamma_{2}(\varepsilon) =\frac{1}{12}(1+\varepsilon)$
and $\gamma_{3}(\varepsilon) =(1+\varepsilon).$\\
The initial conditions for the system are $x(0) = \left[
            \begin{array}{ccc}
              -2& 1 \\
            \end{array}
          \right]^{T}$,
for the observer $\hat{x}(0) = \left[
            \begin{array}{ccc}
              2& 2 \\
            \end{array}
          \right]^{T}$.
Now, select $L=\left[
            \begin{array}{ccc}
              -5& -5 \\
            \end{array}
          \right]^{T}$ and $A_{L}$ is Hurwitz.  Using Matlab, the solutions of the Lyapunov
equations \eqref{Ly2} is given  by
$$ P=\left[
  \begin{array}{ccc}
     0.1200  &  0.1000\\
    0.1000  &  1.1000\\
  \end{array}\right].$$
So, $\lambda_{\max}(P)=1.1101$ and $\lambda_{\min}(P)=0.1099$.  This implies that condition \eqref{condi2} is satisfied for all $0<\varepsilon<0.06$.
According to the practical stability improved in the proof of Theorem \ref{th2}, it is clear from Figs. 1 and 2
that  the global uniform practical convergence is ensured with radius
For our numerical simulation, we choose $\varepsilon=0.05.$

\section{Conclusion}
In this paper, the problem of global uniform practical exponential stability and  an observer for a class of time-delay nonlinear
systems have been considered.
This class of systems covers the systems having a triangular structure. In the case of a constant time delay,
we have derived delay-independent conditions to ensure global exponential stability.
We have suggested sufficient conditions to guarantee a practical stability of the proposed observer
in the case of a bounded and unknown variable time delay.
Finally, the effectiveness of the conditions
obtained in this paper is verified in a numerical example.

 \end{document}